\newtheorem{theorem}{Theorem}
\newtheorem{proposition}[theorem]{Proposition}
\newtheorem{lemma}[theorem]{Lemma}
\title{Subcritical and supercritical Klein-Gordon-Maxwell equations without Ambrosetti-Rabinowitz condition}
\author{Patr\'{i}cia L. Cunha\thanks{Supported by FAPESP/Brazil.}\thanks{patcunha80@gmail.com} \\
\footnotesize{Departamento de Matem\'atica, ICMC-USP, S\~ao Carlos, SP, Brazil} }
\begin{document}
\maketitle

\begin{abstract}
In this article we present some results on the existence of positive and ground state solutions for the nonlinear Klein-Gordon-Maxwell equations.
We introduce a general nonlinearity with subcritical and supercritical growth which does not require the usual Ambrosetti-Rabinowitz condition.
The proof is based on variational methods and perturbation arguments.
\end{abstract}

\text{\footnotesize{\textit{Keywords}: Klein-Gordon-Maxwell equations; Ambrosetti-Rabinowitz condition.}}

\section{Introduction}

The Klein-Gordon-Maxwell system
\begin{equation}\label{eq0} \left \{ \begin{array}{ll}
 -\Delta u+ [m_0^2-(\omega +\phi)^2]u= f(u) &
 \mbox{in}\quad \mathbb{R}^{3}\\
\hspace{0.26cm} \Delta \phi=(\omega+\phi)u^{2} & \mbox{in}\quad
\mathbb{R}^{3}
\end{array}\right.
\end{equation}
arises in a very interesting physical context: as a model describing the nonlinear Klein-Gordon field interacting with the
electromagnetic field. More specifically, it represents a standing wave $\psi=u(x) e^{i\omega t}$ in equilibrium with a purely electrostatic
field $\textbf{E}=-\nabla\phi(x)$.

Benci and Fortunato \cite{Benci-Fortunato-2002} were pioneering on working with this system. They considered $|\omega|<|m_0|$ and $f(u)=|u|^{q-2}u$,
$4<q<2^*=6$, and proved that system (\ref{eq0}) has infinitely many radially symmetric solutions. In \cite{D'Aprile-Mugnai}, D'Aprile and Mugnai
extended the interval of definition of the power in the nonlinearity for the case $2<q\leq 4$. A nonexistence result has been established by the same
authors in \cite{D'Aprile-Mugnai-nonexistence}. They proved that any weak solution of (\ref{eq0}) vanishes identically under the conditions
\begin{itemize}
\item[(i)] $f:\mathbb{R}\rightarrow\mathbb{R}$ is a continuous function;
\item[(ii)] For every $s\in\mathbb{R}$, either $sf(s)+2(m_0^2-\omega^2)s^2\geq 6F(s)$ or $2F(s)\geq sf(s)$.
\end{itemize}
In particular, $f(u)=|u|^{2^*-2}u$ and $m_0>\omega$ satisfies (i) and (ii).

Motivated by the above results, Cassani \cite{Cassani} proved the existence of nontrivial radially symmetric solutions in $\mathbb{R}^{3}$ for the
critical case $f(u)=|u|^{q-2}u+|u|^{2^*-2}u$, $4\leq q< 2^*=6$. Afterwards, Carrião, Cunha $\&$ Miyagaki \cite{Carriao-Cunha-Miyagaki} completed the result
of Cassani \cite{Cassani} considering the case $2<q<4$.

In \cite{Azzollini-Pomponio-KGM}, the authors Azzollini and Pomponio showed that system (\ref{eq0}) admits a ground state solution with
subcritical exponents $q$ in the range $2<q<6$. Taking into account \cite{Azzollini-Pomponio-KGM}, Carrião, Cunha $\&$
Miyagaki \cite{Carriao-Cunha-Miyagaki-2} proved the existence of positive ground state solutions for the critical Klein-Gordon-Maxwell system with
a potential $V$, namely,
\begin{eqnarray*} \left \{ \begin{array}{ll}
-\Delta u+ V(x)u-(2\omega +\phi)\phi u=\mu|u|^{q-2}u +|u|^{2^*-2}u, & \mbox{in}\quad \mathbb{R}^{3}\\
\hspace{0.26cm} \Delta \phi=(\omega+\phi)u^{2}, & \mbox{in}\quad \mathbb{R}^{3}
\end{array}\right.
\end{eqnarray*}

We observe that this class of Klein-Gordon-Maxwell system with such potential $V(x)$ is closely related to a number of several other works. In fact,
the potential $V(x)$ considered also satisfies the constant case $m_0^2-\omega^2$ which has been extensively treated, see
e.g. \cite{Azzollini-Pomponio-KGM,Azzollini-Pisani-Pomponio,Benci-Fortunato-2002,Cassani,D'Aprile-Mugnai,D'Aprile-Mugnai-nonexistence}.

This article is concerned with the Klein-Gordon-Maxwell system written as
\begin{equation}\label{kgm} \left \{ \begin{array}{ll}
 -\Delta u+ V(x)u-(2\omega +\phi)\phi u=f(u), & \mbox{in}\quad \mathbb{R}^{3}\\
\hspace{0.26cm} \Delta \phi=(\omega+\phi)u^{2}, & \mbox{in}\quad \mathbb{R}^{3}
\end{array}\right. \tag{$\mathcal{KGM}$}
\end{equation}
where $V\in C(\mathbb{R}^3,\mathbb{R})$ satisfies
\begin{itemize}
\item[($V_1$)] there exists $\alpha>0$ such that $V(x)\geq\alpha$, for all $x\in\mathbb{R}^N$;
\item[($V_2$)] $V(x)=V(x+y)$, for all $ x\in\mathbb{R}^3,\,y\in\mathbb{Z}^3$.
\end{itemize}

In the first part of this paper we assume the following conditions on the nonlinearity $f\in C^1(\mathbb{R}^+,\mathbb{R})$
\begin{itemize}
\item[($f_1$)] $f(0)=0$;
\item[($f_2$)] $\displaystyle\lim_{s\rightarrow 0^+}\frac{f(s)}{s}=0$;
\item[($f_3$)]  There exist $C>0$ and $p\in(4,6)$ such that $|f(s)|\leq C(s+|s|^{p-1})$, for all $ s \in\mathbb{R}^+$.
\end{itemize}

To the best of our knowledge the only result considering a general nonlinearity in the Klein-Gordon-Maxwell system is due to Azzollini, Pisani
$\&$ Pomponio \cite{Azzollini-Pisani-Pomponio}. They proved the existence of weak solutions for the limit case $\omega=m_0$, and also for
$m_0>\omega>0$, assuming among others hypothesis that $f$ satisfies the Ambrosetti-Rabinowitz condition:
\begin{itemize}
\item [(AR)] There exists $\theta>4$ and $s_0>0$ such that  $0<\theta F(s)\leq sf(s)$, for all $s\in\mathbb{R}$, \\ where $F(s)=\int_{0}^{s}f(t)\,dt$.
\end{itemize}

The condition (AR) is widely assumed in the studies of elliptic equation by variational methods. As it is known, the condition (AR) is employed
not only to show that the Euler-lagrange Euler-Lagrange functional associated has a mountain pass geometry, but also to guarantee that the Palais-Smale,
or Cerami, sequences are bounded. Although (AR) is a quite natural condition, it is somewhat restrictive because it eliminates many nonlinearities
such as $f(s)=s^3(4\ln(1+|s|)+s/(1+|s|))$.

Observe that (AR) implies the weaker condition there exist $\theta>4$ and constant $C_1, C_2>0$ such that $F(s)\geq C_1|s|^\theta-C_2$, for every $|s|$,
sufficiently large, which, in its turn, implies another much weaker one
\begin{itemize}
\item[($f_4$)] $\displaystyle\lim_{s\rightarrow +\infty}\frac{F(s)}{s^4}=+\infty$.
\end{itemize}

In order to prove that the Euler-Lagrange functional associated with the  $(\mathcal{KGM})$ system possesses a bounded Cerami sequence, we also
assume that the nonlinearity $f$ satisfies
\begin{itemize}
\item[($f_5$)] $\frac{f(s)}{s^3}$ is increasing in $|s|>0$.
\end{itemize}

In \cite{Alves-Souto-Soares} Alves, Soares $\&$ Souto considered the Schr\"{o}dinger-Poisson system
\begin{eqnarray} \label{SP}\left \{ \begin{array}{ll}
-\Delta u+ V(x)u+\psi u=f(u), & \mbox{in}\quad \mathbb{R}^{3}\\
-\Delta \psi=u^{2}, & \mbox{in}\quad \mathbb{R}^{3}
\end{array}\right.
\end{eqnarray}
under conditions $(f_1)-(f_5)$ and assuming periodicity in $V$, they proved that the Schr\"{o}dinger-Poisson system possesses a positive ground state solution.
Besides they substitute condition $(f_5)$ by a weaker one
\begin{itemize}
\item [(${f'_5}$)] $H(s)=sf(s)-4F(s)\geq 0$, for all $ s\in\mathbb{R}$,
\end{itemize}
and proved that the Shr\"{o}dinger-Poisson system has a positive solution.

In the same spirit of \cite{Alves-Souto-Soares}, we prove that their results apply to the $(\mathcal{KGM})$ system with the periodicity condition on
$V$ as well. In our case, however, many technical difficulties arise due to the presence of a non-local term $\phi$, which is not homogeneous as it is in
the Schr\"{o}dinger-Poisson system. Hence, a more careful analysis of the interaction between the couple $(u,\phi)$ is required.

Our first result is
\begin{theorem}\label{teoremaA} Suppose $V$ satisfies $(V_1)-(V_2)$ and $f$ satisfies $(f_1)-(f_5)$. Then the $(\mathcal{KGM})$ system has a positive
ground state solution.
\end{theorem}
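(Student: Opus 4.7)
The first step is to recast \eqref{kgm} as a one-variable problem. For each $u\in H^1(\mathbb{R}^3)$ the second equation $\Delta\phi=(\omega+\phi)u^2$ has a unique weak solution $\phi_u\in D^{1,2}(\mathbb{R}^3)$, and a maximum-principle argument shows $-\omega\le\phi_u\le 0$. Denoting $\|u\|^2=\int(|\nabla u|^2+Vu^2)\,dx$, the natural reduced functional is
\[
I(u)=\tfrac12\|u\|^2-\tfrac{\omega}{2}\!\int\phi_u u^2\,dx-\!\int F(u)\,dx,
\]
whose critical points give solutions $(u,\phi_u)$ of \eqref{kgm}. Hypotheses $(V_1)$–$(V_2)$ make $\|\cdot\|$ equivalent to the standard $H^1$-norm and make $I$ invariant under $\mathbb{Z}^3$-translations, which will be crucial later.

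Next I verify the mountain-pass geometry for $I$. From $(f_1)$–$(f_3)$ and the fact that $-\omega\phi_u u^2\ge 0$, one easily checks that $I(u)\ge\delta>0$ on a small sphere. To produce a point where $I$ goes negative I use $(f_4)$: scaling a fixed nonzero $u_0\ge0$ by $t\to\infty$, the nonlinear term $\int F(tu_0)$ grows faster than $t^4$ while the quadratic part and the $\phi_u$-term both grow at most like $t^4$ (the latter by the standard estimate $|\!\int\phi_u u^2|\le C\|u\|^4$); hence $I(tu_0)\to-\infty$. A mountain-pass level $c>0$ and a Cerami sequence $(u_n)$ with $I(u_n)\to c$, $(1+\|u_n\|)\|I'(u_n)\|\to 0$ therefore exist. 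The key non-trivial step is the boundedness of $(u_n)$ in the absence of (AR). For this I exploit the identity
\[
I(u_n)-\tfrac14\langle I'(u_n),u_n\rangle=\tfrac14\|u_n\|^2+\tfrac14\!\int\phi_u^2 u_n^2\,dx+\!\int\!\Bigl[\tfrac14 u_n f(u_n)-F(u_n)\Bigr]dx,
\]
obtained by testing the $\phi$-equation against $\phi_{u_n}$ (which is how the extra nonnegative $\phi^2 u^2$ term appears). Since $(f_5)$ implies $(f'_5)$ (let $h(s)=f(s)/s^3$; increasing $h$ gives $sf'(s)\ge 3f(s)$, whence $(sf-4F)'\ge 0$), the integrand and the $\phi$-term are nonnegative, and the left-hand side is $c+o(1)$, so $\|u_n\|$ stays bounded. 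This is the main technical obstacle, where the non-homogeneity of the Klein–Gordon coupling (absent in the Schrödinger–Poisson case of \cite{Alves-Souto-Soares}) requires the careful identity above.

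Having $(u_n)$ bounded, extract a weak limit $u_n\rightharpoonup u$ in $H^1$. Standard continuity properties of $u\mapsto\phi_u$ (continuous for weak convergence, up to subsequences, on bounded sets) give that $u$ is a weak critical point of $I$. To ensure $u\neq 0$ I use periodicity of $V$. By a Lions-type vanishing alternative, either $\sup_{y\in\mathbb{R}^3}\int_{B_1(y)}u_n^2\to 0$—in which case $u_n\to 0$ in $L^q$ for $2<q<6$, which together with $(f_2)$–$(f_3)$ and the bound on $\int\phi_{u_n}u_n^2$ forces $I(u_n)\to 0$, contradicting $c>0$—or there exist $y_n\in\mathbb{Z}^3$ such that $\tilde u_n(\cdot)=u_n(\cdot+y_n)$ has a nonzero weak limit $\tilde u$. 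By $(V_2)$ the translated sequence remains a bounded Cerami sequence at the same level, and its weak limit $\tilde u\not\equiv 0$ is a nontrivial critical point of $I$. To reach a \emph{positive} solution I replace $f$ by $f^+(s)=f(s)\chi_{\{s\ge 0\}}$ from the outset; the argument above then yields a nonnegative weak solution, and the maximum principle applied to $-\Delta u+[V-(2\omega+\phi_u)\phi_u]u\ge 0$ upgrades it to $u>0$. Finally, ground-state character follows by minimizing $I$ over the set of nontrivial critical points: for any such critical point $v$ one has $I(v)\ge c$ by standard path arguments using $(f_5)$ (the map $t\mapsto I(tv)$ attains a unique maximum), and Fatou combined with weak lower semicontinuity of the three nonnegative pieces in the decomposition above shows that the infimum is attained at the solution just constructed.
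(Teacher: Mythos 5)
Your proposal is correct and follows essentially the same route as the paper: the reduction to $I(u)$ via $\phi_u$ with $-\omega\le\phi_u\le 0$, the mountain-pass geometry from $(f_1)$--$(f_4)$, boundedness of the Cerami sequence through the identity $4I(u_n)-I'(u_n)u_n=\|u_n\|^2+\int\phi_{u_n}^2u_n^2\,dx+\int[u_nf(u_n)-4F(u_n)]\,dx$ together with $(f_5)\Rightarrow(f_5')$, the Lions vanishing/nonvanishing dichotomy with $\mathbb{Z}^3$-translations permitted by $(V_2)$, the maximum principle for positivity, and the Nehari characterization of $c$ plus Fatou's lemma for the ground-state property. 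The only cosmetic differences are that you bound the coupling term by $C\|u\|^4$ where the paper uses the pointwise bound $-\omega\le\phi_{tv}\le 0$ to get $O(t^2)$ growth, and you truncate $f$ to $f^+$ explicitly, which the paper does implicitly by taking $f\in C^1(\mathbb{R}^+,\mathbb{R})$.
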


As a consequence, we have
\begin{theorem}\label{teoremaB} Under the hypotheses of Theorem \ref{teoremaA}, with $(f_5)$ replaced by $(f'_5)$,
the $(\mathcal{KGM})$ system has a positive solution $u$ such that $\|u\|^2\leq 4c$, where $c$ is the minimax level associated with the $(\mathcal{KGM})$
equations.
\end{theorem}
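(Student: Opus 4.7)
The approach is to follow the same mountain-pass strategy as Theorem~\ref{teoremaA}, with the role of the strict monotonicity $(f_5)$ replaced by the integrated inequality $H(s)\geq 0$ that comes from $(f'_5)$. The first step is to observe that only $(f_1)$--$(f_4)$ are used to verify the mountain-pass geometry of the reduced functional $I$, so exactly as in the proof of Theorem~\ref{teoremaA} one obtains a Cerami sequence $\{u_n\}\subset H^1(\mathbb{R}^3)$ with $I(u_n)\to c$ and $(1+\|u_n\|)\|I'(u_n)\|_{H^{-1}}\to 0$. Using the defining relation $-\Delta\phi_u+\phi_u u^2=-\omega u^2$, a direct computation yields the identity
\[ I(u)-\tfrac{1}{4}\langle I'(u),u\rangle=\tfrac{1}{4}\|u\|^2+\tfrac{1}{4}\int_{\mathbb{R}^3}\phi_u^2\,u^2\,dx+\tfrac{1}{4}\int_{\mathbb{R}^3}H(u)\,dx, \]
in which both integral terms are nonnegative (the first pointwise, the second by $(f'_5)$). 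Hence $\|u_n\|^2\leq 4c+o(1)$; the Cerami sequence is bounded and any weak limit will automatically satisfy $\|u\|^2\leq 4c$ by weak lower semicontinuity.

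To turn this bounded Cerami sequence into a nontrivial weak solution I would exploit $(V_2)$. A Lions-type concentration lemma rules out vanishing: if $\sup_{y\in\mathbb{R}^3}\int_{B_1(y)}u_n^2\,dx\to 0$, then $u_n\to 0$ in $L^q(\mathbb{R}^3)$ for every $q\in(2,6)$, and together with $(f_2)$--$(f_3)$ this forces $I(u_n)\to 0$, contradicting $c>0$. Hence there exist $y_n\in\mathbb{Z}^3$ and $\delta>0$ with $\int_{B_1(0)}\tilde u_n^2\,dx\geq\delta$, where $\tilde u_n(\cdot)=u_n(\cdot+y_n)$. Invariance of $I$ and $I'$ under integer translations --- which uses both $(V_2)$ and the autonomy of $f$ --- makes $\{\tilde u_n\}$ another Cerami sequence at level $c$, so along a subsequence $\tilde u_n\rightharpoonup u\not\equiv 0$ in $H^1(\mathbb{R}^3)$. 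Testing $I'(\tilde u_n)\to 0$ against functions in $C^\infty_c(\mathbb{R}^3)$ and passing to the limit yields $I'(u)=0$, while extending $f$ by zero on $(-\infty,0]$ (equivalently, testing against $u^-$) combined with the strong maximum principle produces $u>0$.

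The main obstacle is the passage to the limit in the coupled equation. Unlike the Schr\"{o}dinger--Poisson problem studied in \cite{Alves-Souto-Soares}, the nonlocal term $(2\omega+\phi_u)\phi_u u$ here is \emph{not} homogeneous in $u$, so showing that $\phi_{\tilde u_n}\to\phi_u$ in the appropriate sense requires a careful use of the a priori bound $\|\phi_u\|_{D^{1,2}}\leq C\|u\|^2$ together with local compactness of the Sobolev embeddings --- precisely the delicate analysis of the coupling $(u,\phi_u)$ announced in the introduction. Once $I'(u)=0$ is secured, the boundedness established in the first step closes the loop via $\|u\|^2\leq\liminf_{n}\|\tilde u_n\|^2\leq 4c$, giving the announced estimate.
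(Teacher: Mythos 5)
Your proposal is correct and follows essentially the same route as the paper: mountain-pass geometry from $(f_1)$--$(f_4)$, boundedness of the Cerami sequence via the identity $4I(u_n)-I'(u_n)u_n=\|u_n\|^2+\int\phi_{u_n}^2u_n^2\,dx+\int H(u_n)\,dx$ together with $(f'_5)$, exclusion of vanishing by a Lions-type dichotomy, $\mathbb{Z}^3$-translation invariance from $(V_2)$ to obtain a nontrivial weak limit, careful passage to the limit in the non-homogeneous nonlocal term, the maximum principle for positivity, and weak lower semicontinuity for $\|u\|^2\leq 4c$. The only (immaterial) difference is organizational: the paper first identifies the weak limit of the original sequence as a critical point and only then treats the case where it vanishes, whereas you pass directly to the translated sequence.
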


We observe that condition $(f_5)$ implies that $H(s)=sf(s)-4F(s)$ is a non-negative function and that $H(s)$ is increasing in $s$, as we can see
in \cite[Lemma 2.2]{Liu}. Hence, condition $(f'_5)$ is weaker than $(f_5)$ and also weaker than $(AR)$, whereas $(f_5)$ and (AR) are just different.

Under the setting of Theorem \ref{teoremaB} and motivated by the paper of Jeanjean and Tanaka \cite{Jeanjean-Tanaka-asypmtoticallylinear},
we can also show the existence of a ground state solution 

\begin{theorem}\label{teoremaB2} Suppose $V$ satisfies $(V_1)-(V_2)$ and $f$ satisfies $(f_1)-(f_4)$ and $(f'_5)$. Then the (\ref{kgm}) system has a 
ground state solution.
\end{theorem}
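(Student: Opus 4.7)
The plan is to follow the strategy of Jeanjean and Tanaka \cite{Jeanjean-Tanaka-asypmtoticallylinear}, bypassing the lack of Nehari-type monotonicity by minimizing the reduced Euler-Lagrange functional $J$ of $(\mathcal{KGM})$ directly over the set of all nontrivial critical points
\[
\mathcal{M}:=\{u\in H^1(\mathbb{R}^3)\setminus\{0\}:J'(u)=0\},\qquad m:=\inf_{u\in\mathcal{M}}J(u).
\]
By Theorem \ref{teoremaB} the set $\mathcal{M}$ is nonempty, and a standard use of $(f_1)$--$(f_3)$ together with the $H^1$-estimate for $\phi_u$ gives $m>0$.

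Given a minimizing sequence $(u_n)\subset\mathcal{M}$ with $J(u_n)\to m$, the first task is boundedness in $H^1(\mathbb{R}^3)$. For this I would use the algebraic identity
\[
4J(u)-\langle J'(u),u\rangle=\|u\|^2+\int_{\mathbb{R}^3}\phi_u^2u^2\,dx+\int_{\mathbb{R}^3}H(u)\,dx,
\]
which follows by substituting $\Delta\phi_u=(\omega+\phi_u)u^2$ into the definition of $J$. Since $H(u)\geq 0$ by $(f'_5)$ and $\phi_u^2u^2\geq 0$, and $J'(u_n)=0$, this yields $\|u_n\|^2\leq 4J(u_n)=4m+o(1)$.

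The periodicity of $V$ then rules out vanishing. If $\sup_{y\in\mathbb{R}^3}\int_{B_1(y)}u_n^2\,dx\to 0$, then Lions' lemma forces $u_n\to 0$ in $L^q(\mathbb{R}^3)$ for every $q\in(2,6)$; combined with $(f_2)$, $(f_3)$ and the $L^\infty$-bound on $\phi_{u_n}$, this would give $J(u_n)\to 0$, contradicting $m>0$. Hence there exist $\delta>0$, $R>0$ and $y_n\in\mathbb{Z}^3$ with $\int_{B_R(y_n)}u_n^2\,dx\geq\delta$. By $(V_2)$ the functional $J$ is invariant under $\mathbb{Z}^3$-translations, so $\tilde u_n:=u_n(\cdot+y_n)$ still lies in $\mathcal{M}$ with $J(\tilde u_n)\to m$, and along a subsequence $\tilde u_n\rightharpoonup u\neq 0$ in $H^1(\mathbb{R}^3)$.

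To conclude, standard nonlocal-compactness arguments give $\phi_{\tilde u_n}\to\phi_u$ in a topology strong enough to pass to the limit in $\langle J'(\tilde u_n),\varphi\rangle=0$, so $J'(u)=0$ and $u\in\mathcal{M}$, giving $J(u)\geq m$. For the reverse inequality, since $H(\tilde u_n)\geq 0$ and $\phi_{\tilde u_n}^2\tilde u_n^2\geq 0$, Fatou's lemma together with weak lower semicontinuity of $\|\cdot\|$ yields
\[
4m=\lim_{n\to\infty}\Bigl(\|\tilde u_n\|^2+\int\phi_{\tilde u_n}^2\tilde u_n^2+\int H(\tilde u_n)\Bigr)\geq\|u\|^2+\int\phi_u^2u^2+\int H(u)=4J(u),
\]
so $J(u)=m$, i.e.\ $u$ is a ground state. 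The main obstacle is that, contrary to the Schr\"odinger--Poisson case of \cite{Alves-Souto-Soares}, the map $u\mapsto\phi_u$ is neither linear nor homogeneous; both the limit $\phi_{\tilde u_n}\to\phi_u$ needed for $J'(u)=0$ and the pointwise-a.e.\ convergence required for Fatou on $\phi_{\tilde u_n}^2\tilde u_n^2$ must be justified through the a priori bound $-\omega\leq\phi_u\leq 0$ and the elliptic regularity of the Poisson-type equation satisfied by $\phi_u$.
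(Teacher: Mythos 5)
Your proposal is correct and follows essentially the same route as the paper's own proof: minimize $I$ over the set of nontrivial critical points, use the identity $4I(u)-I'(u)u=\|u\|^2+\int_{\mathbb{R}^3}\phi_u^2u^2\,dx+\int_{\mathbb{R}^3}H(u)\,dx$ together with $(f'_5)$ both for boundedness of the minimizing sequence and for the Fatou inequality, rule out vanishing via periodicity and $\mathbb{Z}^3$-translations, and identify the weak limit as a nontrivial critical point at level $m$. The nonlocal convergence $\phi_{\tilde u_n}\to\phi_u$ that you flag as the main obstacle is exactly what the paper settles in Lemma \ref{existence-of-Phi} and the estimates (\ref{eq6})--(\ref{eq7}), so nothing essential is missing.
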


In the second part of the paper we consider the function $f$ written as:
\begin{eqnarray*}f(s)=f_0(s)+\lambda g(s),
\end{eqnarray*}
where $\lambda$ is a positive real parameter, $f_0$ and $g$ are locally H\"{o}lder continuous functions satisfying:
\begin{itemize}
\item [$(F_1)$] $f_0(0)=g(0)=0$ and $g(s)\geq 0$, for all $s\in\mathbb{R}$;
\item [$(F_2)$] $\displaystyle\lim_{s\rightarrow 0^+}\frac{f_0(s)}{s}=\lim_{s\rightarrow 0^+}\frac{g(s)}{s}=0$;
\item [$(F_3)$] There exists $q\in (4,6)$ such that $|f_0(s)|\leq|s|^{q-1}$, for all $s\in\mathbb{R}$;
\item [$(F_4)$] $\displaystyle\lim_{s\rightarrow +\infty}\frac{F_0(s)}{s^4}=+\infty$, where $F_0(s)=\int_{0}^{s}f_0(t)\,dt$;
\item [$(F_5)$] $sf_0(s)-4F_0(s)\geq 0$  and  $sg(s)-4G(s)\geq 0$,\, for all $s\in\mathbb{R}$, where $G(s)=\int_{0}^{s}g(t)\,dt$;
\item [$(F_6)$] There exists a sequence of positive real numbers $(M_n)$ converging to $+\infty$ such that
\begin{eqnarray*} \frac{g(s)}{s^{q-1}}\leq\frac{g(M_n)}{M_n^{q-1}},\quad \mbox{for all}\, \, s\in [0,M_n],\, n\in\mathbb{N}.
\end{eqnarray*}
\end{itemize}
then, the following result holds
\begin{theorem}\label{teoremaC} Suppose that $V$ satisfies $(V_1)-(V_2)$ and $f$ satisfies $(F_1)-(F_6)$. Then there exists $\lambda_0>0$ such that
the (\ref{kgm}) system has a positive solution for all $\lambda\leq\lambda_0$.
\end{theorem}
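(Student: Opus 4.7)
The approach is a standard sub-critical truncation. I would subcriticize $\lambda g$ at the level $M_n$ supplied by $(F_6)$ to produce an auxiliary system to which Theorem \ref{teoremaB} applies, and then use an $L^\infty$ a priori bound together with the smallness of $\lambda$ to check that the resulting solution actually stays below $M_n$ pointwise, hence solves (\ref{kgm}).

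More concretely, I would fix $n\in\mathbb{N}$ (to be chosen later) and define
\[
\tilde g_n(s) = \begin{cases} g(s), & 0\leq s \leq M_n, \\[2pt] \dfrac{g(M_n)}{M_n^{q-1}}\,s^{q-1}, & s > M_n, \end{cases}
\]
extended by zero for $s\leq 0$. By $(F_6)$ one has the uniform subcritical bound $|\tilde g_n(s)|\leq A_n|s|^{q-1}$ with $A_n := g(M_n)/M_n^{q-1}$, and with $\tilde f_\lambda := f_0+\lambda\tilde g_n$ a routine check shows that $\tilde f_\lambda$ satisfies $(f_1)$--$(f_4)$ (with $p=q$) and $(f'_5)$: the latter holds on $[0,M_n]$ directly from $(F_5)$, and on $(M_n,\infty)$ because $s\tilde g_n(s)-4\tilde G_n(s)$ is nonnegative at $s=M_n$ (again by $(F_5)$) and has derivative $(q-4)A_n s^{q-1}>0$ there. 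Theorem \ref{teoremaB} therefore yields, for each $\lambda>0$, a positive solution $u_{n,\lambda}$ of the truncated system with
\[
\|u_{n,\lambda}\|^2 \leq 4\,\tilde c_{n,\lambda},
\]
where $\tilde c_{n,\lambda}$ is the mountain-pass level of the truncated functional. The key monotonicity observation is that since $g\geq 0$ (so $\tilde g_n\geq 0$), and since the nonlocal $\phi$-contribution to the reduced functional depends only on $u$ and not on $\lambda$, increasing $\lambda$ strictly decreases $\tilde I_\lambda$ along any fixed admissible path; consequently $\tilde c_{n,\lambda}\leq c_0$ for a constant $c_0$ independent of $n$ and $\lambda$.

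To close the argument I would apply a Moser/Brezis--Kato iteration to
\[
-\Delta u_{n,\lambda}+\bigl[V(x)-(2\omega+\phi_{u_{n,\lambda}})\phi_{u_{n,\lambda}}\bigr]u_{n,\lambda} = \tilde f_\lambda(u_{n,\lambda}),
\]
using that the effective potential is $\geq\alpha$ thanks to the standard estimate $-\omega\leq\phi_u\leq 0$ and that $|\tilde f_\lambda(s)|\leq (1+\lambda A_n)|s|^{q-1}$ with $q<6$. This produces an estimate of the form
\[
\|u_{n,\lambda}\|_\infty \leq K\,(1+\lambda A_n)^a\,\|u_{n,\lambda}\|_{H^1}^b \leq K(1+\lambda A_n)^a(4c_0)^{b/2},
\]
with $K,a,b$ depending only on $q,\alpha,\omega$. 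Since $M_n\to+\infty$, I first pick $n$ so large that $M_n\geq 2K(4c_0)^{b/2}$ and then choose $\lambda_0>0$ small enough that $(1+\lambda_0 A_n)^a\leq 2$; for every $\lambda\leq\lambda_0$ the resulting bound gives $\|u_{n,\lambda}\|_\infty\leq M_n$, so $\tilde g_n(u_{n,\lambda})=g(u_{n,\lambda})$ pointwise and $u_{n,\lambda}$ is a positive solution of (\ref{kgm}).

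The main technical obstacle is the quantitative $L^\infty$ bound: the Brezis--Kato/Moser constants must depend on the subcritical coefficient of $\tilde f_\lambda$ in a controlled way so that the smallness of $\lambda A_n$ can actually be exploited, and the non-local term $(2\omega+\phi)\phi u$ must be treated as a harmless bounded perturbation using $-\omega\leq\phi_u\leq 0$. A subsidiary point is to carefully verify the monotonicity $\tilde c_{n,\lambda}\leq c_0$ in presence of the nonhomogeneous $\phi$-coupling, which is not automatic in the Klein--Gordon--Maxwell setting but does go through because $\phi_u$ is determined by $u$ alone, independently of $\lambda$.
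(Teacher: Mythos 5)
Your proposal follows essentially the same route as the paper: truncate $g$ at the level $M_n$ from $(F_6)$, verify $(f_1)$--$(f_4)$ and $(f'_5)$ for $f_0+\lambda g_n$ so that Theorem \ref{teoremaB} yields a positive solution with $\|u\|^2\leq 4c_{\lambda,n}\leq 4c_0$ (using $F_{\lambda,n}\geq F_0$, hence $I_{\lambda,n}\leq I_0$), and then use the a priori $L^\infty$ bound together with $b(x)=V(x)-(2\omega+\phi_u)\phi_u\geq 0$ to choose first $n$ and then $\lambda_0$ so that $\|u\|_\infty\leq M_n$ and the truncation is never active. The only cosmetic difference is that you sketch the Brezis--Kato/Moser iteration yourself, whereas the paper invokes it ready-made as Proposition \ref{estimate} (taken from \cite{Alves-Souto-Soares-supercritical}); your explicit verification of $(f'_5)$ for the truncated nonlinearity is a welcome detail the paper omits.
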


It is noteworthy that to prove Theorem \ref{teoremaC} we do not require any growth assumption on $g$, consequently, on $f$. We observe that
condition $(F_6)$ holds if
\begin{eqnarray*}\lim_{|s|\rightarrow +\infty}\frac{g(s)}{s^{q-1}}=+\infty.
\end{eqnarray*}
In particular, $f$ may be $f(s)=s^{q-1}+s^{p-1}$, for all $q<6<p$, or $f(s)$ may behave like $e^{s}$ at infinity.

For this situation involving supercritical growth, we cite the work of Alves, Soares $\&$ Souto \cite{Alves-Souto-Soares-supercritical} where
they studied the Schr\"{o}dinger-Poisson system (\ref{SP}).

In the proof of Theorem \ref{teoremaC} we use similar arguments to those used in \cite{Alves-Souto-Soares-supercritical}. The strategy consists in
combining perturbation arguments, estimates of solutions for the subcritical Klein-Gordon-Maxwell system in terms of the $L^{\infty}$ norm and the
mountain pass theorem.

\section{Preliminaries}

We denote by $E$ the Sobolev space $H^{1}(\mathbb{R}^3)$ endowed with the norm
\begin{eqnarray*} \|u\|^{2}=\int_{\mathbb{R}^{3}}(|\nabla u|^2+V(x)u^2)\,dx,
\end{eqnarray*}
\noindent which is equivalent to the usual Sobolev norm on $H^1(\mathbb{R}^3)$. $\mathcal{D}^{1,2}\equiv\mathcal{D}^{1,2}(\mathbb{R}^{3})$
represents the completion of $\mathcal{C}_{0}^{\infty}(\mathbb{R}^{3})$ with respect to the norm
\begin{eqnarray*} \|u\|_{\mathcal{D}^{1,2}}^{2}=\int_{\mathbb{R}^{3}}|\nabla u|^2\,dx.
\end{eqnarray*}

The solutions $(u,\phi)\in E\times\mathcal{D}^{1,2}$ of the (\ref{kgm}) system are critical points of the functional
$J:E\times \mathcal{D}^{1,2} \rightarrow \mathbb{R}$ defined as
\begin{eqnarray*} && J(u,\phi) =\frac{1}{2}\int_{\mathbb{R}^{3}} (|\nabla u|^2-|\nabla\phi|^2+[V(x)-(2\omega+\phi)\phi]u^2)\,dx
-\int_{\mathbb{R}^{3}}F(u)\, dx,
\end{eqnarray*}
\noindent which by standard arguments is $C^{1}$ on $E\times \mathcal{D}^{1,2}$.

As it has been done by the aforementioned authors, we apply a \textit{reduction method} in order to avoid the difficulty originated by the
strongly indefiniteness of the functional $J$.

\begin{proposition}\label{Propriedade-phi}
For every $u \in E$, there exists a unique $\phi=\phi_u\in \mathcal{D}^{1,2}$ which solves $\Delta \phi=(\omega+\phi)u^{2}$. Furthermore,
in the set $\{x:\,u(x)\neq 0\}$ we have $-\omega\leq\phi_u\leq 0$ for $\omega>0$.
\end{proposition}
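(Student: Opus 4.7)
The plan is to split the statement into two parts: first establish existence and uniqueness of $\phi_u$ via a variational/Lax--Milgram argument, then derive the pointwise bounds by testing the equation against suitable truncations of $\phi_u$.

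For the existence and uniqueness, fix $u\in E$ and rewrite the equation as
\begin{equation*}
-\Delta\phi+u^{2}\phi=-\omega u^{2}.
\end{equation*}
On the Hilbert space $\mathcal{D}^{1,2}$ I would introduce the bilinear form $a(\phi,\psi)=\int_{\mathbb{R}^3}\nabla\phi\cdot\nabla\psi\,dx+\int_{\mathbb{R}^3}u^{2}\phi\psi\,dx$ and the linear functional $\ell(\psi)=-\omega\int_{\mathbb{R}^3}u^{2}\psi\,dx$. Since $u\in H^{1}(\mathbb{R}^3)\hookrightarrow L^{12/5}(\mathbb{R}^3)$ and $\mathcal{D}^{1,2}\hookrightarrow L^{6}(\mathbb{R}^3)$, Hölder gives $|a(\phi,\psi)|\le(1+C\|u\|^{2})\,\|\phi\|_{\mathcal{D}^{1,2}}\|\psi\|_{\mathcal{D}^{1,2}}$ and $|\ell(\psi)|\le C\omega\|u\|^{2}\|\psi\|_{\mathcal{D}^{1,2}}$; coercivity is immediate from $a(\phi,\phi)\ge\|\phi\|_{\mathcal{D}^{1,2}}^{2}$. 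Lax--Milgram then yields a unique $\phi_u\in\mathcal{D}^{1,2}$ solving the equation (equivalently, $\phi_u$ is the unique minimizer of the strictly convex coercive functional $\tfrac12\|\phi\|_{\mathcal{D}^{1,2}}^{2}+\tfrac12\int u^{2}\phi^{2}+\omega\int u^{2}\phi$).

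For the upper bound, I would test the equation against $\phi_u^{+}\in\mathcal{D}^{1,2}$ to obtain
\begin{equation*}
\int_{\mathbb{R}^3}|\nabla\phi_u^{+}|^{2}\,dx+\int_{\mathbb{R}^3}u^{2}|\phi_u^{+}|^{2}\,dx=-\omega\int_{\mathbb{R}^3}u^{2}\phi_u^{+}\,dx\le0,
\end{equation*}
which forces $\phi_u^{+}\equiv 0$, hence $\phi_u\le 0$. For the lower bound, set $w=(\phi_u+\omega)^{-}$ and use the rewriting $-\Delta\phi_u+u^{2}(\phi_u+\omega)=0$. If $w\in\mathcal{D}^{1,2}$, testing gives $-\int|\nabla w|^{2}\,dx-\int u^{2}w^{2}\,dx=0$, whence $w\equiv 0$ and $\phi_u\ge-\omega$, which in particular holds on $\{u\neq 0\}$.

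The main obstacle is the admissibility of $w=(\phi_u+\omega)^{-}$ as a test function, since $\omega$ is a nonzero constant and does not itself belong to $\mathcal{D}^{1,2}$. The trick is to exploit the fact that $\phi_u\in L^{6}(\mathbb{R}^3)$: the support $\{\phi_u\le -\omega\}$ has finite Lebesgue measure by Chebyshev, on it one has the pointwise bound $0\le w\le|\phi_u|$ so $w\in L^{6}$, and $\nabla w=-\nabla\phi_u\,\chi_{\{\phi_u+\omega<0\}}\in L^{2}$; hence $w\in\mathcal{D}^{1,2}$ and the computation above is rigorous. Once this technical point is settled, the proposition follows.
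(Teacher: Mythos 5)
The paper does not actually prove this proposition; it simply cites \cite{Benci-Fortunato-2002,Carriao-Cunha-Miyagaki,D'Aprile-Mugnai-nonexistence}, and your argument is essentially the standard one found in those references: Lax--Milgram (equivalently, minimization of the strictly convex coercive functional) for existence and uniqueness, followed by testing with the truncations $\phi_u^{+}$ and $(\phi_u+\omega)^{-}$ for the sign bounds. Your treatment of the only delicate point --- that $(\phi_u+\omega)^{-}$ is an admissible test function because $\{\phi_u<-\omega\}$ has finite measure, $0\le(\phi_u+\omega)^{-}\le|\phi_u|\in L^{6}$ there, and $\nabla(\phi_u+\omega)^{-}\in L^{2}$, so the function lies in $\mathcal{D}^{1,2}=\{v\in L^{6}:\nabla v\in L^{2}\}$ --- is correct, and in fact yields the bounds $-\omega\le\phi_u\le0$ a.e.\ on all of $\mathbb{R}^{3}$, which is slightly stronger than the statement. (One cosmetic remark: for the boundedness of the bilinear form $a$ you need $u\in L^{3}$ so that $u^{2}\in L^{3/2}$ pairs with $\phi\psi\in L^{3}$; the embedding $H^{1}\hookrightarrow L^{12/5}$ you quote is what is needed for the linear functional $\ell$. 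Both embeddings hold, so nothing breaks.)
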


\begin{proof} The proof can be found in \cite{Benci-Fortunato-2002,Carriao-Cunha-Miyagaki,D'Aprile-Mugnai-nonexistence}.
\end{proof}

According to Proposition \ref{Propriedade-phi}, we can define
\begin{eqnarray*} \Phi :E\rightarrow \mathcal{D}^{1,2}
\end{eqnarray*}
which maps each $u\in E$ in the unique solution of $\Delta \phi=(\omega+\phi)u^{2}$. From standard arguments it results $\Phi\in C^1(E,\mathcal{D}^{1,2})$.

Hence, we have
\begin{eqnarray}\label{system2-modificado}-\Delta \phi_u + u^{2} \phi_u
= -\omega u^{2}.
\end{eqnarray}

Multiplying both members of (\ref{system2-modificado}) by $\phi_u$ and integrating by parts, we obtain
\begin{eqnarray}\label{system2-modificado2}\int_{\mathbb{R}^{3}}
{| \nabla\phi_u|^{2}}dx = -\int_{\mathbb{R}^{3}}{\omega\phi_u}
u^{2}dx-\int_{\mathbb{R}^{3}}{\phi_u^{2}u^{2}}dx.
\end{eqnarray}

By the definition of $J$ and using (\ref{system2-modificado2}), we obtain a $C^1$ functional $I:E\rightarrow\mathbb{R}$ given by
\begin{eqnarray*}I(u)=\frac{1}{2}\int_{\mathbb{R}^3}(|\nabla u|^2+V(x)u^2)\,dx-
\frac{1}{2}\int_{\mathbb{R}^3}\omega\phi_u u^2\,dx-\int_{\mathbb{R}^3}F(u)\,dx
\end{eqnarray*}
and its Gateaux derivative is
\begin{eqnarray*}I'(u)v=\int_{\mathbb{R}^3}(\nabla u\cdot \nabla v+V(x)uv)\,dx-\int_{\mathbb{R}^3}(2\omega +\phi)\phi uv\,dx
-\int_{\mathbb{R}^3}f(u)v\,dx
\end{eqnarray*}
for every $u,v\in E$. Then, $(u,\phi)\in E\times \mathcal{D}^{1,2}$ is a weak solution of $(\mathcal{KGM})$ if, and only if, $\phi=\phi_u$ and
$u\in E$ is a critical point of $I$, that is, a weak solution of
\begin{eqnarray}\label{equacao1} -\Delta u+V(x)u-(2\omega+\phi_u)\phi_u u=f(u), \quad \mbox{in}\,\,\mathbb{R}^3.
\end{eqnarray}

From $(f_1)-(f_4)$ and Proposition \ref{Propriedade-phi} the functional $I$ satisfies the structural assumptions of the mountain pass theorem,
as we can see in the next lemma.

\begin{lemma}\label{MPG} Suppose that $V$ satisfies $(V_1)$ and $f$ satisfies $(f_1)-(f_4)$. Then, there exist $r>0$ and $e\in E$, $\|e\|>r$ such that
\begin{eqnarray*}b\doteq\inf_{\|u\|=r}I(u)>I(0)=0\geq I(e).
\end{eqnarray*}
\end{lemma}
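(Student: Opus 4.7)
\noindent\emph{Plan.} The lemma asserts the classical mountain pass geometry for $I$, and the only deviation from the purely local case is the nonlocal term $-\tfrac12\int \omega\phi_u u^2\,dx$. I will exploit the sign information from Proposition \ref{Propriedade-phi}: since $\omega>0$ and $-\omega\le \phi_u\le 0$ on $\{u\neq 0\}$, one has $0\le -\omega\phi_u u^2\le \omega^2 u^2$ pointwise. Thus the nonlocal term is non-negative (which helps at the small-norm step) and at most quadratic in $u$ (which is sub-dominant at the large-norm step).

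\noindent\emph{Step 1 (positivity on a small sphere).} From $(f_2)$ and $(f_3)$, for each $\varepsilon>0$ there is $C_\varepsilon>0$ with $|F(s)|\le \varepsilon s^2+C_\varepsilon |s|^p$ for $s\ge 0$. Since $(V_1)$ gives $\|u\|_2^2\le \alpha^{-1}\|u\|^2$ and the Sobolev embedding $E\hookrightarrow L^p$ is continuous (as $p\in(4,6)\subset(2,2^*)$), integration yields $\int F(u)\,dx\le \varepsilon C_1\|u\|^2+C_\varepsilon C_2\|u\|^p$. Dropping the non-negative nonlocal term gives
\[
I(u)\ \ge\ \tfrac12(1-2C_1\varepsilon)\|u\|^2-C_\varepsilon C_2\|u\|^p.
\]
Choosing $\varepsilon$ small and then $r>0$ small enough that $C_\varepsilon C_2 r^{p-2}<\tfrac18$ produces a constant $b>0$ with $I(u)\ge b$ whenever $\|u\|=r$.

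\noindent\emph{Step 2 (existence of $e$).} Fix $u_0\in C_c^\infty(\mathbb{R}^3)$ with $u_0\ge 0$, $u_0\not\equiv 0$, and look at $I(tu_0)$ as $t\to+\infty$. Using $-\omega\phi_{tu_0}\le \omega^2$ on $\{u_0\neq 0\}$,
\[
-\tfrac12\int\omega\phi_{tu_0}(tu_0)^2\,dx\ \le\ \tfrac{\omega^2 t^2}{2}\,\|u_0\|_2^2,
\]
so the quadratic plus nonlocal contribution to $I(tu_0)$ is $O(t^2)$. For the nonlinear term write
\[
\frac{1}{t^4}\int F(tu_0)\,dx\ =\ \int_{\{u_0>0\}}\frac{F(tu_0)}{(tu_0)^4}\,u_0^4\,dx .
\]
By $(f_4)$ the integrand goes pointwise to $+\infty$ on $\{u_0>0\}$. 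Splitting on $A_t:=\{tu_0\ge s_0\}$, where $s_0$ is chosen so that $F(s)\ge M s^4$ for $s\ge s_0$, and using that $F$ is bounded on $[0,s_0]$ to handle the complement (whose contribution is $O(t^{-4})$), a Fatou/monotone-convergence argument yields $\liminf_{t\to\infty}t^{-4}\int F(tu_0)\,dx\ge M\int u_0^4\,dx$. Since $M$ is arbitrary, $t^{-4}I(tu_0)\to -\infty$, hence $I(tu_0)\to -\infty$. Picking $t$ so large that $\|tu_0\|>r$ and $I(tu_0)\le 0$ provides the required $e$.

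\noindent\emph{Main obstacle.} Step 2 is the delicate one, because $\phi_u$ depends nonlinearly on $u$ and does not scale homogeneously under $u\mapsto tu$, so the dilation behavior of the nonlocal term cannot be read off by a simple scaling. The $L^\infty$ bound $|\phi_u|\le \omega$ from Proposition \ref{Propriedade-phi} bypasses this difficulty by giving a clean $O(t^2)$ upper estimate, which is weaker than the super-quartic growth of $\int F(tu_0)\,dx$ forced by $(f_4)$.
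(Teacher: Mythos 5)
Your proof is correct and follows essentially the same route as the paper: the small-sphere estimate via $F(s)\le \varepsilon s^2+C_\varepsilon|s|^p$ together with the sign $-\omega\phi_u u^2\ge 0$, and the ray estimate using the pointwise bound $|\phi_{tu_0}|\le\omega$ from Proposition \ref{Propriedade-phi} to reduce the quadratic-plus-nonlocal part to $O(t^2)$, which is then beaten by the super-quartic growth of $\int F(tu_0)\,dx$ coming from $(f_4)$. The only cosmetic difference is that the paper packages the $(f_4)$ step as a global inequality $F(s)\ge Ms^4-Cs^2$ rather than your splitting on $\{tu_0\ge s_0\}$; both arguments are equivalent.
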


\begin{proof} From $(f_2)-(f_3)$, given $\epsilon>0$ there exists $C_\epsilon>0$ such that
\begin{eqnarray*}F(s)\leq \epsilon s^2+C_\epsilon s^p, \quad \forall s\in \mathbb{R}.
\end{eqnarray*}

By Sobolev immersions, there exist positive constants $\alpha$ and $\beta$ such that
\begin{eqnarray*}I(u)\geq \Big[ \Big( \frac{1}{2}-\epsilon\alpha \Big)-\beta C_\epsilon\|u\|^{p-2} \Big]\|u\|^2.
\end{eqnarray*}

We can assume, by decreasing $\epsilon$ if necessary, that there exist positive numbers $b$, $r$ such that $b=\inf \{I(u),\|u\|=r\}>I(0)=0$.

From $(f_4)$, for any $\mathrm{v}\in E$ and $M>0$, there exists $C>0$ such that $F(s)\geq Ms^4-Cs^2$, for all $s\in\mathbb{R}$. Hence, from
Proposition \ref{Propriedade-phi},
\begin{eqnarray*}I(t\mathrm{v})&=&\frac{1}{2}\int_{\mathbb{R}^3}[|t\mathrm{v}|^2+(t\mathrm{v})^2]\,dx-
\frac{1}{2}\int_{\mathbb{R}^3}\omega\phi_{t\mathrm{v}}(t\mathrm{v})^2\,dx-\int_{\mathbb{R}^3}F(t\mathrm{v})\,dx\\
&\leq & \frac{t^2}{2}\|\mathrm{v}\|^2-\frac{t^2}{2}\int_{\mathbb{R}^3}\omega\phi_{t\mathrm{v}}\mathrm{v}^2\,dx-
\int_{\mathbb{R}^3}[M(t\mathrm{v})^4-C(t\mathrm{v})^2]\,dx\\
&\leq & \frac{t^2}{2}\|\mathrm{v}\|^2+\frac{t^2}{2}\omega^2\|\mathrm{v}\|_2^2+
Ct^2\|\mathrm{v}\|_{2}^2-Mt^4\\
&\leq & \frac{t^2}{2}\|\mathrm{v}\|^2+t^2 C_1\|\mathrm{v}\|^2-Mt^4 \rightarrow - \infty, \quad \text{as}\,\, t\rightarrow\infty.
\end{eqnarray*}

Then, for $t$ sufficiently large, $e=t\mathrm{v}$ satisfies $\|e\|>r$ and $I(e)<0=I(0)$.
\end{proof}

Now, by using a version of the mountain pass theorem (see \cite{Ekeland}), there is a Cerami sequence $(u_n)\subset E$ such that
\begin{eqnarray*}\label{cerami-sequence}I(u_n)\rightarrow c \qquad \mbox{and}
\qquad
(1+\|u_n\|)I'(u_n)\rightarrow 0,
\end{eqnarray*}
\noindent where
\begin{eqnarray*}\label{valorC}\displaystyle c:=\inf_{\gamma\in\Gamma} \max_{t\in [0,1]}I(\gamma(t))\qquad \mbox{and}
\qquad \Gamma=\{\gamma:[0,1]\rightarrow E : \gamma(0)=0,
\gamma(1)=e \}.
\end{eqnarray*}

\begin{lemma}\label{cerami-bounded}
Suppose that $V$ satisfies $(V_1)$ and $f$ satisfies $(f_1)-(f_4)$ and $(f'_5)$. Then the Cerami sequence $(u_n)\subset E$ for $I$
is bounded.
\end{lemma}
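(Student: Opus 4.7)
The plan is to execute a standard Cerami-sequence boundedness argument, exploiting the quartic matching between the nonlocal term $\omega\phi_u u^2$ and the exponent $4$ appearing in $(f'_5)$. The key step is to evaluate the linear combination $I(u_n)-\tfrac{1}{4}I'(u_n)u_n$, which is designed to kill simultaneously the indefinite cross-term coming from $\phi_{u_n}$ and the sign-indefinite part of $F(u_n)$.

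First I would observe that from the Cerami condition $(1+\|u_n\|)I'(u_n)\to 0$ one has
$$|I'(u_n)u_n|\le \|I'(u_n)\|\,\|u_n\|\le (1+\|u_n\|)\|I'(u_n)\|\to 0,$$
so in particular $I(u_n)-\tfrac{1}{4}I'(u_n)u_n\longrightarrow c$. Next I would compute this combination explicitly. Expanding $(2\omega+\phi_u)\phi_u=2\omega\phi_u+\phi_u^2$ and substituting into the formulas for $I$ and $I'$, the two occurrences of $\tfrac{1}{2}\omega\int\phi_u u^2$ cancel and one is left with the identity
$$I(u)-\tfrac{1}{4}I'(u)u=\tfrac{1}{4}\|u\|^2+\tfrac{1}{4}\int_{\mathbb{R}^3}\phi_u^2 u^2\,dx+\tfrac{1}{4}\int_{\mathbb{R}^3}H(u)\,dx,$$
where $H(s)=sf(s)-4F(s)$ is precisely the quantity appearing in $(f'_5)$.

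Once this identity is in hand, both remaining nonlocal integrands are manifestly non-negative: $\phi_u^2 u^2\ge 0$ pointwise, and $H(u)\ge 0$ by hypothesis $(f'_5)$. Combining the identity with the convergence above therefore yields
$$\tfrac{1}{4}\|u_n\|^2\le I(u_n)-\tfrac{1}{4}I'(u_n)u_n=c+o(1),$$
whence $\|u_n\|^2\le 4c+o(1)$ and $(u_n)$ is bounded in $E$.

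I do not expect a genuine obstacle here, since the whole argument is driven by the algebraic identity above; the only conceptual point worth highlighting is that the coefficient $\tfrac{1}{4}$ is pinned down uniquely by the requirement that the sign-indefinite cross-term $\omega\!\int\phi_u u^2$ cancel exactly, and this is precisely why the constant $4$ appears in $(f'_5)$ — it reflects the quartic homogeneity of the Klein--Gordon--Maxwell nonlocal interaction. As a welcome by-product, the resulting estimate $\|u_n\|^2\le 4c+o(1)$ is exactly the bound invoked later in Theorem~\ref{teoremaB}.
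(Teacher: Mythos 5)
Your proposal is correct and follows essentially the same route as the paper, which computes $4I(u_n)-I'(u_n)u_n=\|u_n\|^2+\int_{\mathbb{R}^3}\phi_{u_n}^2u_n^2\,dx+\int_{\mathbb{R}^3}[f(u_n)u_n-4F(u_n)]\,dx\geq\|u_n\|^2$ and concludes from the boundedness of the left-hand side; your version is just this identity divided by $4$, with the useful extra remark that the Cerami condition forces $I'(u_n)u_n\to 0$ and hence yields the explicit bound $\|u_n\|^2\leq 4c+o(1)$.
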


\begin{proof} Using condition $(f'_5)$, we have
\begin{eqnarray*}
4I(u_n)-I'(u_n)(u_n)&=&\|u_n\|^2+\int_{\mathbb{R}^3}\phi_{u_n}^2 u_n^2\,dx + \int_{\mathbb{R}^3}[f(u_n)u_n-4F(u_n)]\\
&\geq & \|u_n\|^2.
\end{eqnarray*}
Once $4I(u_n)-I'(u_n)(u_n)$ is bounded, the last limit implies the boundedness of $(u_n)$.
\end{proof}

Observe that from condition $(f_5)$ and \cite[Lemma 4.1]{Willem},
\begin{eqnarray*}c=\inf_{u\in\mathcal{N}}I(u),
\end{eqnarray*}
where
\begin{eqnarray*}\mathcal{N}=\{u\in E\backslash\{0\}:I'(u)u=0\}.
\end{eqnarray*}

\section{The subcritical case}\label{The subcritical case}

In this section we prove Theorem \ref{teoremaA} and Theorem \ref{teoremaB}.

In view of Lemma \ref{cerami-bounded} we have that $(\phi_{u_n})$ is bounded in $\mathcal{D}^{1,2}$.
Indeed, we have
\begin{eqnarray*}\|\phi_{u_n}\|_{\mathcal{D}^{1,2}}^2 &\leq &
\int_{\mathbb{R}^3} |\nabla\phi_{u_n}|^2 \,dx + \int_{\mathbb{R}^3}
\phi_{u_n}^2 u_{n}^2 \,dx \\
 &=& -\omega \int_{\mathbb{R}^3} \phi_{u_n} u_{n}^2 \,dx \leq C
 \omega \|\phi_{u_n}\|_{\mathcal{D}^{1,2}}
 \|u_{n}\|_{\frac{12}{5}}^2.
\end{eqnarray*}

So, passing to a subsequence if necessary, we may assume
\begin{itemize}
    \item []\hspace{3cm} $u_{n}\rightharpoonup u$,\hspace{0.3cm}
    weakly in $E$, \hspace{0.1cm} as $n\rightarrow\infty$,
    \item [] \hspace{3cm} $\phi_{u_n}\rightharpoonup \varphi$,\hspace{0.3cm}
    weakly in
    $\mathcal{D}^{1,2}$, \hspace{0.1cm} as $n\rightarrow\infty$.
\end{itemize}

\begin{lemma}\label{existence-of-Phi} $\varphi=\phi_u$.
\end{lemma}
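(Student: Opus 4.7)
The plan is to show that $\varphi$ is a weak solution of $-\Delta \phi + u^2 \phi = -\omega u^2$ and then invoke the uniqueness part of Proposition \ref{Propriedade-phi} to conclude $\varphi = \phi_u$. Since $\phi_{u_n}$ solves $-\Delta \phi_{u_n} + u_n^2 \phi_{u_n} = -\omega u_n^2$, for any test function $\psi \in C_0^\infty(\mathbb{R}^3)$ we have
\begin{equation*}
\int_{\mathbb{R}^3} \nabla \phi_{u_n}\cdot\nabla \psi\,dx + \int_{\mathbb{R}^3} u_n^2 \phi_{u_n} \psi\,dx = -\omega \int_{\mathbb{R}^3} u_n^2 \psi\,dx,
\end{equation*}
and I would pass to the limit term by term.

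First, from $\phi_{u_n} \rightharpoonup \varphi$ in $\mathcal{D}^{1,2}$ the linear term yields $\int \nabla\phi_{u_n}\cdot\nabla\psi\,dx \to \int \nabla\varphi\cdot\nabla\psi\,dx$ immediately. For the other terms I would use that $\psi$ has compact support $K$; on $K$, the Rellich--Kondrachov theorem applied to $u_n \rightharpoonup u$ in $H^1$ gives strong convergence $u_n \to u$ in $L^p(K)$ for every $p \in [1,6)$, so in particular $u_n^2 \to u^2$ in $L^s(K)$ for every $s \in [1,3)$. This handles the right-hand side: $\int u_n^2 \psi\,dx \to \int u^2 \psi\,dx$.

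For the nonlinear term $\int u_n^2 \phi_{u_n}\psi\,dx$, the Sobolev embedding $\mathcal{D}^{1,2} \hookrightarrow L^6(\mathbb{R}^3)$ combined with weak convergence in $\mathcal{D}^{1,2}$ gives $\phi_{u_n} \to \varphi$ strongly in $L^q(K)$ for every $q \in [1,6)$. Writing $u_n^2\phi_{u_n} - u^2\varphi = (u_n^2 - u^2)\phi_{u_n} + u^2(\phi_{u_n}-\varphi)$, I would estimate each piece by H\"older's inequality on $K$: the first with, e.g., exponents $5/2$ and $5/3$ (so $u_n^2 - u^2 \to 0$ in $L^{5/2}(K)$, using $p=5<6$, while $\phi_{u_n}$ stays bounded in $L^{5}(K)$), and the second with analogous exponents using that $u^2 \in L^{5/2}(K)$ is fixed while $\phi_{u_n} - \varphi \to 0$ in $L^5(K)$. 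This yields $\int u_n^2\phi_{u_n}\psi\,dx \to \int u^2 \varphi\psi\,dx$.

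Putting the three limits together, $\varphi$ weakly satisfies $-\Delta \varphi + u^2\varphi = -\omega u^2$. The uniqueness statement in Proposition \ref{Propriedade-phi} then forces $\varphi = \phi_u$. The only delicate point is the passage to the limit in the cubic interaction term $u_n^2 \phi_{u_n}$, since it mixes the nonlocal field (only weakly convergent globally) with $u_n^2$; the argument works because localization via $\psi$ lets me trade weak convergence in $\mathcal{D}^{1,2}$ for strong convergence in $L^q_{\mathrm{loc}}$ below the critical exponent.
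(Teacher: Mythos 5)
Your proposal is correct and follows essentially the same route as the paper: test the equation $\Delta\phi_{u_n}=(\omega+\phi_{u_n})u_n^2$ against $\psi\in C_0^\infty(\mathbb{R}^3)$, pass to the limit in each of the three terms using weak convergence in $\mathcal{D}^{1,2}$ together with local strong $L^s$ convergence of $u_n$ and $\phi_{u_n}$ below the critical exponent, and then invoke the uniqueness in Proposition \ref{Propriedade-phi}. Your write-up merely supplies the explicit H\"older exponents that the paper leaves implicit.
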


\begin{proof}
The proof is an easy adaptation of \cite{Azzollini-Pomponio-KGM}, but for the sake of completeness we give a sketch of it.

We have that
\begin{eqnarray}
&& u_{n}\rightharpoonup u \quad \text{weakly in}\, L^{s}(\mathbb{R}^3),\hspace{.17cm} 2\leq s\leq 6\nonumber\\
 && u_{n}\rightarrow u \quad \text{in}\, L^{s}_{loc}(\mathbb{R}^3),\hspace{.17cm} 2\leq s < 6\label{loc}.
\end{eqnarray}

Since $\phi_{u_n}\rightharpoonup \varphi$,\hspace{0.3cm} weakly in $\mathcal{D}^{1,2}$, \hspace{0.1cm} as $n\rightarrow\infty$, then
\begin{eqnarray}
&&\phi_{u_n}\rightharpoonup \varphi \quad \text{weakly in}\,\,
L^{6}(\mathbb{R}^3)\nonumber\\
&&\phi_{u_n}\rightarrow\varphi\quad \text{in}\,\,
L^{s}_{loc}(\mathbb{R}^3), \hspace{.17cm} 1\leq s<6 \label{loc2}.
\end{eqnarray}

It remains to show that $\varphi=\phi_{u}$. By Proposition (\ref{Propriedade-phi}), it suffices to show that $\varphi$ satisfies
$\Delta\varphi=(\omega+\varphi)u^2$.

Let $\psi\in C_0^{\infty}(\mathbb{R}^3)$ be a test function. Since $\Delta\phi_{u_n}=(\omega+\phi_{u_n})u_n^2$, we have
\begin{eqnarray*}-\int_{\mathbb{R}^3}\langle\nabla\phi_{u_n},
\nabla\psi\rangle\,dx=\int_{\mathbb{R}^3}\omega\psi u_n^2\,
dx+\int_{\mathbb{R}^3}\phi_{u_n}\psi u_n^2.
\end{eqnarray*}

From (\ref{loc}), (\ref{loc2}) and the boundedness of ($\phi_{u_n})$ in $\mathcal{D}^{1,2}$, the following three sentences hold
\begin{eqnarray}\label{A}\begin{array}{rcl}
\displaystyle\int_{\mathbb{R}^3}\langle\nabla\phi_{u_n},\nabla\psi\rangle\,dx
&\stackrel{ n\rightarrow\infty}{\longrightarrow} &
\displaystyle\int_{\mathbb{R}^3}\langle\nabla\varphi,\nabla\psi\rangle\,dx\\
\displaystyle\int_{\mathbb{R}^3}\phi_{u_n} u_n^2\psi\,dx
&\stackrel{ n\rightarrow\infty}{\longrightarrow}
\displaystyle & \displaystyle \int_{\mathbb{R}^3}\varphi u^2\psi\,dx\\
\displaystyle\int_{\mathbb{R}^3} u_n^2\psi\,dx &\stackrel{
n\rightarrow\infty}{\longrightarrow}
&\displaystyle\int_{\mathbb{R}^3}u^2\psi\,dx
\end{array}
\end{eqnarray}
proving that $\varphi=\phi_{u}$.

\end{proof}

Consider $v\in C_0^\infty(\mathbb{R}^3)$. Using Hölder's inequality and the limitation of ($\phi_{u_n}$), we get
\begin{eqnarray}\label{eq6}
\int_{\mathbb{R}^3}(\phi_{u_n}u_n-\phi_{u}u)v\, dx &=&
\int_{\mathbb{R}^3}\phi_{u_n}(u_n-u)v\, dx +
\int_{\mathbb{R}^3}u(\phi_{u_n}-\phi_u)v\, dx\nonumber\\
&\leq & C\|\phi_{u_n}\|_{\mathcal{D}^{1,2}}\Big(
\int_{\Omega}|u_n-u|^{\frac{6}{5}}
|v|^{\frac{6}{5}}\,dx
\Big)^{\frac{5}{6}}+\nonumber\\
&&+\int_{\Omega}(\phi_{u_n}-\phi_u)uv\,dx\nonumber\\
&=& o_n(1)
\end{eqnarray}
and
\begin{eqnarray}\label{eq7}
\int_{\mathbb{R}^3}(\phi_{u_n}^2 u_n-\phi_{u}^2 u)v\, dx &=&
\int_{\mathbb{R}^3}\phi_{u_n}^2(u_n-u)v\, dx +
\int_{\mathbb{R}^3}u(\phi_{u_n}^2-\phi_u^2)v\, dx\nonumber\\
&\leq & C \|\phi_{u_n}\|_{\mathcal{D}^{1,2}}\Big(
\int_{\Omega}|u_n-u|^{\frac{3}{2}}|v|^{\frac{3}{2}}\,dx
\Big)^{\frac{2}{3}}+\nonumber\\
&&+\int_{\Omega}(\phi_{u_n}^2-\phi_u^2)u v\,dx\nonumber\\
&=& o_n(1),
\end{eqnarray}
where $\Omega$ is the support of the function $v$.

Therefore,
\begin{eqnarray*}\int_{\mathbb{R}^3}(2\omega+\phi_{u_n})\phi_{u_n}u_nv\,dx-\int_{\mathbb{R}^3}(2\omega+\phi_{u})\phi_{u}uv\,dx=o_n(1),
\end{eqnarray*}
for all $ v\in C_0^\infty(\mathbb{R}^3)$, which implies
$$I'(u)v=0, \quad \mbox{for all} \,\,v\in E.
$$

Consequently, $u$ is a weak solution for (\ref{equacao1}).

In view of the lack of compactness, we must prove that actually $u$ does not vanish. To this end we suppose, conversely, that $u\equiv 0$.

We claim that only one of the following conditions hold
\begin{itemize}
\item[(i)] For all $q\in(2,6)$
$$\lim_{n\rightarrow+\infty}\int_{\mathbb{R}^3}|u_n|^q\,dx=0.
$$
\item[(ii)] There are positive numbers $\rho$, $\eta$ and a sequence $(y_n)\subset\mathbb{R}^3$ such that
$$\liminf_{n\rightarrow +\infty}\int_{B_\rho(y_n)}|u_n|^2\,dx>\rho>0.
$$
\end{itemize}

First suppose (ii) holds. If (i) also occurs, from conditions $(f_2)$ and $(f_3)$, we would have
$$\lim_{n\rightarrow+\infty}\int_{\mathbb{R}^3}f(u_n)u_n\,dx=0
$$
and
$$\|u_n\|^2=\int_{\mathbb{R}^3}f(u_n)u_n\,dx+o_n(1),
$$
since
\begin{eqnarray*}-\int_{\mathbb{R}^3}(2\omega+\phi_{u_n})\phi_{u_n}u_n^2\,dx&=&-\int_{\mathbb{R}^3}\omega\phi_{u_n}u_n^2\,dx+
\int_{\mathbb{R}^3}|\nabla\phi_{u_n}|^2\,dx\\
&\leq & 2\omega\|\phi_{u_n}\|_6\|u\|_{\frac{12}{5}}^2\\
&=& o_n(1).
\end{eqnarray*}

As a consequence, the sequence $(u_n)$ would be strongly convergent to 0 in $E$, implying that $I(u_n)\rightarrow 0$, contrary to $I(u_n)\rightarrow c>0$,
as $n\rightarrow +\infty$. Hence, (i) can not occur.

On the other hand, if (ii) does not hold, then there exists $\bar{r}>0$ such that
$$\lim_{n\rightarrow +\infty}\sup_{y_n\in\mathbb{R}^3}\int_{B_{\bar{r}}(y_n)}|u_n|^2\,dx=0
$$
and then $\int_{\mathbb{R}^3}|u_n|^{\alpha}\,dx\rightarrow 0$, for all $\alpha\in(2,6)$, which implies that (i) occurs.

Now, define $\tilde{u}_n(x)=u_n(x+y_n)$. From condition $(V_2)$ we can assume that $y_n\in\mathbb{Z}^3$. Moreover, $(\tilde{u}_n)$ is bounded in $E$ and we
can clearly assume that $(\tilde{u}_n)$ is weakly convergent to $\tilde{u}$, for some $\tilde{u}\in E$. From (ii), $\tilde{u}\neq 0$.

Note that because
$$\int_{\mathbb{R}^3}(2\omega+\phi_{\tilde{u}})\phi_{\tilde{u}}\tilde{u}^2\,dx=
\int_{\mathbb{R}^3}(2\omega+\phi_{u})\phi_{u}u^2\,dx
$$
we have
\begin{eqnarray*}I'(\tilde{u}_n)\tilde{u}_n=I'(u_n)u_n\quad \mbox{and}\quad I(\tilde{u}_n)=I(u_n),
\end{eqnarray*}
hence $(\tilde{u}_n)$ is a Cerami sequence for $I$, and finally
\begin{eqnarray*}I'(\tilde{u})=0\,\,\,\, \mbox{with}\,\,\tilde{u}\neq 0.
\end{eqnarray*}

It follows that $(\tilde{u},\phi_{\tilde{u}})$ is a nontrivial solution for the $(\mathcal{KGM})$ system and then $I(\tilde{u})\geq c$. By using
bootstrap arguments and the maximum principle, we can conclude that $\tilde{u}$ is positive.

Note that from ($f_5'$),
\begin{eqnarray*} 4I(\tilde{u}_n)-I'(\tilde{u}_n)\tilde{u}_n\geq \|u_n\|^2, \quad \mbox{for all}\,\, n\in\mathbb{N}.
\end{eqnarray*}

Passing to the limit we obtain
\begin{eqnarray*} 4c=\liminf_{n\rightarrow +\infty}(4I(\tilde{u}_n)-I'(\tilde{u}_n)\tilde{u}_n)\geq \|u\|^2
\end{eqnarray*}
then, $\|u\|^2\leq 4c$.

Observing that until now we have used condition $(f'_5)$, thus the proof of Theorem \ref{teoremaB} is complete.

Finally, in order to verify that $\tilde{u}$ is a ground state solution, we observe that from $(f_5)$ and Fatou's Lemma,
\begin{eqnarray*}4c &=&\liminf_{n\rightarrow +\infty}(4I(\tilde{u}_n)-I'(\tilde{u}_n)\tilde{u}_n)\\
&=& \liminf_{n\rightarrow +\infty}\Big[ \|\tilde{u}_n\|^2+\int_{\mathbb{R}^3}\phi_{\tilde{u}_n}^2\tilde{u}_n^2\,dx+
\int_{\mathbb{R}^3}H(\tilde{u}_n)\,dx \Big]\\
&\geq &
\|\tilde{u}\|^2 +\int_{\mathbb{R}^3}\phi_{\tilde{u}}^2\tilde{u}^2\,dx+
\int_{\mathbb{R}^3}H(\tilde{u})\,dx\\
&=& 4I(\tilde{u})-I'(\tilde{u})\tilde{u}=4I(\tilde{u})\geq 4c.
\end{eqnarray*}

Hence, $I(\tilde{u})=c$ and so $\tilde{u}$ is a ground state solution of equation (\ref{equacao1}).

\begin{proof}
Now we will prove that there exists a solution $w\in E$ such that $I(w)=m$ where
$$m=\inf\{I(u);\,\,u\neq 0 \,\,\text{and}\,\, I'(u)=0\}.
$$

We first note that $m$ belongs to the interval $(0,c]$ where $c$ is the mountain pass level. Indeed, from $(f'_5)$,
\begin{eqnarray*}4I(u)=4I(u)-I'(u)u=\|u\|^2+\int_{\mathbb{R}^3}\phi_u^2 u^2\,dx+\int_{\mathbb{R}^3}[uf(u)-4F(u)]\,dx>0,
\end{eqnarray*}
for any critical point of $I$. Thus, $m>0$.

On the other hand, in the proof of Theorem \ref{teoremaC}, we obtained a nontrivial critical point $u$ for $I$ as a weak limit of a bounded
$(PS)_c$ sequence $(u_n)$. Then from $(f'_5)$ and Fatou's Lemma,
\begin{eqnarray*}4I(u)&=& 4I(u)-I'(u)u=\|u\|^2+\int_{\mathbb{R}^3}\phi_u^2 u^2\,dx+\int_{\mathbb{R}^3}[uf(u)-4F(u)]\,dx\\
&\leq & \liminf_{n\rightarrow\infty}[4I(u_n)-I'(u_n)u_n]\\
&=& 4c,
\end{eqnarray*}
from what we conclude that $0<m\leq c$.

Now let $(v_n)$ be a sequence of nontrivial critical points of $I$ such that
$$I(v_n)\rightarrow m\in(0,c].
$$

Since $I$ has the mountain pass geometry, we have $\liminf_{n\rightarrow\infty}\|v_n\|\geq r>0$. Then arguing as in the previous sections, $(v_n)$ is
a bounded sequence which converges to $w\in E$, $w\neq 0$. Since $w$ must be a nontrivial critical point of $I$, then $I(w)\geq m$. But, again from
$(f'_5)$ and Fatou's Lemma, $$I(w)\leq\liminf_{n\rightarrow\infty}I(v_n)=m.
$$

Therefore, $I(w)=m$ and this completes the proof of Theorem \ref{teoremaB2}.

\end{proof}

\section{The supercritical case}\label{The supercritical case}

In this section we proof Theorem \ref{teoremaC}.

The following results establish an estimate involving the $L^{\infty}$ norm of a solution to a subcritical problem.

\begin{proposition}\label{estimate} Let $v\in H^1(\mathbb{R}^N)$ be a weak solution of the problem
\begin{eqnarray*}-\Delta v+b(x)v=h(x,v), \quad \text{in}\,\,\mathbb{R}^N,\, N\geq 3,
\end{eqnarray*}
where $h:\mathbb{R}^N\times\mathbb{R}^N\rightarrow \mathbb{R}^N$ is a continuous function verifying, for some $2<q<2^*=2N/(N-2)$, $|h(x,s)|\leq 2|s|^{q-1}$,
$\forall s>0$, and $b$ is a non-negative function in $\mathbb{R}^N$. Then, for all $C>0$, there exists a constant $k=k(q,C)>0$ such that if $\|v\|^2\leq C$,
then $\|v\|_{\infty}\leq k$.
\end{proposition}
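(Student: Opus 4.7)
The natural route is a Moser-type iteration. Starting from $v\in L^{2^{*}}$, which is free from Sobolev's embedding together with the bound $\|v\|^{2}\leq C$, the subcritical growth $|h(x,v)|\leq 2|v|^{q-1}$ with $q<2^{*}$ should allow me to bootstrap the integrability of $v$ through a sequence of $L^{p_{k}}$-norms with $p_{k}\to\infty$, keeping uniform control of the constants in terms of $q$ and $C$ alone.

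For a single step I would fix $\beta\geq 0$ and a truncation level $L>0$, set $v_{L}=\min(|v|,L)$, and test the equation against $\varphi=v\,v_{L}^{2\beta}\in H^{1}$. Discarding the nonnegative $\int b\,v^{2}v_{L}^{2\beta}\,dx$ and using the growth hypothesis, one arrives at
$$\int|\nabla v|^{2}v_{L}^{2\beta}\,dx+2\beta\int_{\{|v|\leq L\}}|\nabla v|^{2}v_{L}^{2\beta}\,dx\leq 2\int|v|^{q-2}(vv_{L}^{\beta})^{2}\,dx.$$
A direct computation of $|\nabla(vv_{L}^{\beta})|^{2}$ (splitting into the sets $\{|v|\leq L\}$ and $\{|v|>L\}$) shows that the left-hand side dominates $(1+\beta)^{-2}\int|\nabla(vv_{L}^{\beta})|^{2}\,dx$, so Sobolev's inequality yields
$$\|vv_{L}^{\beta}\|_{2^{*}}^{2}\leq C_{1}(1+\beta)^{2}\int|v|^{q-2}(vv_{L}^{\beta})^{2}\,dx.$$
H\"older's inequality with conjugate exponents $2^{*}/(q-2)$ and $2^{*}/(2^{*}-q+2)$---which uses precisely $q<2^{*}$---rewrites the right-hand side as $\|v\|_{2^{*}}^{q-2}\|vv_{L}^{\beta}\|_{r}^{2}$ with $r=2\cdot 2^{*}/(2^{*}-q+2)<2^{*}$. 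Absorbing $\|v\|_{2^{*}}^{q-2}$ into a constant $K=K(q,C)$ and letting $L\to\infty$ by monotone convergence, this becomes the iteration
$$\|v\|_{2^{*}(\beta+1)}\leq\bigl[K(1+\beta)^{2}\bigr]^{1/(2(\beta+1))}\|v\|_{r(\beta+1)}.$$

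To close the argument I would set $\alpha=2^{*}/r>1$ and $\beta_{k}+1=\alpha^{k+1}$, so that $p_{k}=2^{*}\alpha^{k}$ satisfies $r(\beta_{k}+1)=p_{k}$ and $2^{*}(\beta_{k}+1)=p_{k+1}$. Starting from $\|v\|_{p_{0}}=\|v\|_{2^{*}}\leq S\sqrt{C}$ and iterating, the logarithm of the bound telescopes into
$$\log\|v\|_{p_{k+1}}\leq\log\|v\|_{2^{*}}+\sum_{j=0}^{k}\frac{\log K+2(j+1)\log\alpha}{2\alpha^{j+1}}.$$
The chief technical point---and the only genuinely delicate step---is verifying that the resulting infinite series converges to a quantity depending only on $q$ and $C$. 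Since $\alpha>1$, the geometric decay $\alpha^{-(j+1)}$ beats the linear growth of the numerator, so the series converges. Passing to $k\to\infty$ then gives $\|v\|_{\infty}=\limsup_{k\to\infty}\|v\|_{p_{k}}\leq k(q,C)$, as required.
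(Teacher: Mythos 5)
Your argument is correct: the paper does not prove this proposition itself but simply defers to \cite[Proposition 2.1]{Alves-Souto-Soares-supercritical}, and the proof there is precisely this Moser-type iteration (truncated test functions $v\,v_L^{2\beta}$, the Sobolev inequality combined with H\"older's inequality using the subcritical gap $q<2^*$, and a convergent series coming from the geometric growth of the exponents $p_k=2^*\alpha^k$). The only cosmetic point is that on the infinite-measure space $\mathbb{R}^N$ one should deduce $\|v\|_{\infty}\leq k$ directly from the uniform bound on the $\|v\|_{p_k}$ (if $|\{|v|>k+\varepsilon\}|>0$ then $\|v\|_{p_k}\geq (k+\varepsilon)|\{|v|>k+\varepsilon\}|^{1/p_k}\rightarrow k+\varepsilon$, a contradiction) rather than from a literal identity $\|v\|_{\infty}=\limsup_{k}\|v\|_{p_k}$, which need not hold there.
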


\begin{proof} The proof can be found in \cite[Proposition 2.1]{Alves-Souto-Soares-supercritical}.
\end{proof}

In order to establish the existence solution asserted by Theorem \ref{teoremaC}, we assume conditions $(F_1)-(F_6)$ and define a sequence of functions
 $(g_n)$ setting
\begin{eqnarray*}g_n(s)= \left \{ \begin{array}{ll}
 0, & \mbox{if}\,\,\, s\leq 0\\
 g(s), & \mbox{if}\,\,\, 0\leq s\leq M_n\\
 \frac{g(M_n)}{M_n^{q-1}}s^{q-1}, & \mbox{if}\,\,\, M_n\leq s.
\end{array}\right.
\end{eqnarray*}

Consider that the problem
\begin{eqnarray}\label{kgm2} \left \{ \begin{array}{ll}
-\Delta u+ V(x)u-(2\omega +\phi)\phi u=f_{\lambda,n}(u), &  \mbox{in}\quad \mathbb{R}^{3}\\
\hspace{0.26cm} \Delta \phi=(\omega+\phi)u^{2}, & \mbox{in}\quad \mathbb{R}^{3}
\end{array}\right.
\end{eqnarray}
which is variational for every $\lambda>0$ and $n\in\mathbb{N}$, because from $(F_3)$ and $(F_6)$, $f_{\lambda,n}(s)=f_0(s)+\lambda g_n(s)$ satisfies
\begin{eqnarray}\label{f-lambdan} |f_{\lambda,n}|\leq (1+\lambda g(M_n) M_n)|s|^{q-1}.
\end{eqnarray}

Using a reduction method, the functional $I_{\lambda,n}:E\rightarrow\mathbb{R}$ defined by
\begin{eqnarray*} I_{\lambda,n}(u)=\frac{1}{2}\int_{\mathbb{R}^3}(|\nabla u|^2+V(x)u^2)\,dx-
\frac{1}{2}\int_{\mathbb{R}^3}\omega\phi_u u^2\,dx-\int_{\mathbb{R}^3}F_{\lambda,n}(u)\,dx
\end{eqnarray*}
is the Euler-Lagrange functional associated with (\ref{kgm2}). From (\ref{f-lambdan}), $I_{\lambda,n}\in
C^1(E,\mathbb{R})$ with Gateaux derivative given by
\begin{eqnarray*}I_{\lambda,n}'(u)v=\int_{\mathbb{R}^3}(\nabla u\cdot \nabla v+V(x)uv)\,dx-\int_{\mathbb{R}^3}(2\omega +\phi)\phi uv\,dx
-\int_{\mathbb{R}^3}f_{\lambda,n}(u)v\,dx,
\end{eqnarray*}
for every $u, v\in E$.

Now we introduce an auxiliary functional $I_0:E\rightarrow\mathbb{R}$ given by
\begin{eqnarray*} I_{0}(u)=\frac{1}{2}\int_{\mathbb{R}^3}(|\nabla u|^2+V(x)u^2)\,dx-
\frac{1}{2}\int_{\mathbb{R}^3}\omega\phi_u u^2\,dx-\int_{\mathbb{R}^3}F_{0}(u)\,dx,
\end{eqnarray*}
where $F_0$ is defined in $(F_4)$.

Once $f_0$ satisfies $(F_1)-(F_4)$ and from Proposition \ref{Propriedade-phi}, $I_0$ possesses the geometric hypothesis of the mountain
pass theorem (see Lemma \ref{MPG}). Then, there exists $e\in H^{1}(\mathbb{R}^3)$ and $\displaystyle c_0:=\inf_{\gamma\in\Gamma} \max_{t\in [0,1]}I_0(\gamma(t))$
where
\begin{eqnarray}\label{gamma}\Gamma=\{\gamma:[0,1]\rightarrow E : \gamma(0)=0, \gamma(1)=e \}.
\end{eqnarray}

Since the function $f_{\lambda,n}$ satisfies conditions $(f_1)-(f_4)$ and $(f_5')$, for every $\lambda>0$, $n\in\mathbb{N}$, and $V$
satisfies $(V_1)-(V_2)$,
then, from Theorem \ref{teoremaB}, the system (\ref{kgm2}) has a positive solution $u_{\lambda,n}\in E$ such that
\begin{eqnarray*}\|u_{\lambda,n}\|^2\leq 4c_{\lambda,n},
\end{eqnarray*}
where $\displaystyle c_{\lambda,n} =\inf_{\gamma\in\Gamma} \max_{t\in [0,1]}I_{\lambda,n}(\gamma(t))$ and $\Gamma$ is defined by (\ref{gamma}),
which is independent of $\lambda$ and $n$. Indeed, from $(F_1)$, we have $F_{\lambda,n}(s)\geq F_0(s)$, for all $s$. Hence,
\begin{eqnarray}\label{eq8} I_{\lambda,n}(v)\leq I_0(v),
\end{eqnarray}
for all $v\in E$. In particular, $I_{\lambda,n}(e)\leq I_0(e)<0$. Thus $\Gamma$ is independent of $\lambda$ and $n$. Besides, from (\ref{eq8}) we have
\begin{eqnarray}\label{eq9} c_{\lambda,n}\leq c_0.
\end{eqnarray}

Now we are ready to proof Theorem \ref{teoremaC}.

Consider $k=k(q,4 c_0)$ given by Proposition \ref{estimate} and fix $n$ such that $k < M_n$. Let $\lambda_0$ be such that $\lambda_0 g(Mn)M_n\leq 1$.

Note that from (\ref{f-lambdan}), we have
\begin{eqnarray}\label{eq10} |f_{\lambda,n}(s)|\leq 2|s|^{q-1}
\end{eqnarray}
for all $s$ and $\lambda<\lambda_0$.

Using Theorem \ref{teoremaB}, there exists a positive solution $u=u_{\lambda,n}$ of (\ref{kgm2}) such that $\|u\|^2\leq 4c_{\lambda,n}$.
Then, from inequality (\ref{eq9}), we conclude
\begin{eqnarray}\label{eq11}\|u\|^2\leq 4c_0.
\end{eqnarray}

In view of Proposition \ref{Propriedade-phi} and condition $(V_1)$, we have that
\begin{eqnarray}\label{eq12} b(x):=V(x)-(2\omega+\phi_u)\phi_u\geq 0.
\end{eqnarray}

Finally, from (\ref{eq10}), (\ref{eq11}) and (\ref{eq12}), we can apply Proposition \ref{estimate} to obtain $\|u\|_{\infty}\leq K$, for some
$K(q,c_0)$, and the proof of Theorem \ref{teoremaC} is complete.

\paragraph{\textbf{Acknowledgements}}
The author is grateful to Professor S. H. M. Soares for enlightening discussions and helpful suggestions.

\bibliography{<your-bib-database>}

\end{document}